\renewcommand{\phi}{\varphi}
\newtheorem{theorem}{Theorem}
\newtheorem{lemma}{Lemma}
\newtheorem*{assumption}{Assumption}
\newtheorem{definition}{Definition}
\begin{document}
\title{Fokker-Planck equations for Marcus stochastic differential equations driven by L\'{e}vy processes}
 \author{\small{\it{Xu Sun$^{1,2,}$,   Xiaofan Li$^2$,    and Yayun Zheng$^1$}}\\
 \\\small{$^1$Huazhong University of Science and Technology,
  Wuhan 430074, Hubei, China}
   \\\small{$^2$Illinois Institute of Technology,
  Chicago, IL 60616, USA}\\
   {\small{E-mail:  xsun15@iit.edu; lix@iit.edu; yayzheng@gmail.com}}\\
  }

\date{April 30, 2016}
\maketitle

\pagestyle{plain}

\begin{abstract}
Marcus stochastic differential equations (SDEs) often are appropriate models for stochastic dynamical systems  driven by non-Gaussian L\'{e}vy processes  and have wide applications in engineering and physical sciences. The probability density of the solution to an SDE offers complete statistical information on the underlying stochastic process. Explicit formula for the Fokker-Planck equation, the governing equation for the probability density, is well-known when the SDE is driven by a Brownian motion. In this paper, we address the open question of finding the Fokker-Plank equations for Marcus SDEs in arbitrary dimensions   driven by non-Gaussian L\'{e}vy processes. The equations are given in a simple form that facilitates theoretical analysis and numerical computation. Several examples are presented to illustrate how the theoretical results can be applied to obtain Fokker-Planck equations for Marcus SDEs driven by L\'{e}vy processes.

\bigskip

{\bf Key Words:} Fokker-Planck equation, Marcus stochastic differential equations, Stochastic dynamical systems, non-Gaussian white noise, L\'{e}vy processes


\end{abstract}
\newpage

\section{Introduction}

Stochastic differential equations (SDEs) have been widely used as mathematical models for stochastic dynamical systems \cite{Klebaner2005,Oksendal2003,Applebaum2009}. Soutions of the SDEs, which are usually nowhere differentiable, are often interpreted   in terms of some stochastic integral. The definition of the solutions may not be unique, because it depends on the specific  stochastic integral involved.

Stochastic dynamical systems under excitation of Gaussian white noise are often modeled by SDEs driven by Brownian motions \cite{Klebaner2005,Oksendal2003}, for which there are two popular definitions, namely, It\^{o} SDEs and Stratonovich SDEs. While the former finds wide application  in finance, enconogy and biology, the latter turns out to be more appropriate models in engineering and  physical  sciences like mechanics and physics \cite{Oksendal2003,LinCai2004}. 

Stochastic dynamical systems under excitation of non-Gaussian white noise are often modeled by SDEs driven by L\'{e}vy processes. The SDEs driven by L\'{e}vy processes can be defined in sense of It\^{o} or Marcus.  The Marcus SDEs \cite{Marcus1978,Marcus1981,KurtzPardouxProtter1995,Applebaum2009} can be regarded as the generalization of Stratonovich SDEs. In addition to the Stratonovich correction term, Marcus SDEs have an extra correction term due to the jumps.  It is recently shown in \cite{SunDuanLi2013} that the DiPaola-Falsone SDEs  \cite{DiPaolaFalsone1993,DiPaolaFalsone1993b}, which are widely used in engineering and physics \cite{KanazawaSagawaHayakawa2012,WanZhu2014}, are actually equivalent to Marcus SDEs.

One of the main  tasks in the research field of stochastic dynamical systems is to quantify how uncertainty propagates and evolves. Given the SDE model of a stochastic system, there are many methods available to achieve the goal. One of them  is to study how the moments of the solution evolve in time \cite{LinCai2004}. Although the method  of moments can provide some important information for the uncertainty,  the statistical feature of the non-Gaussian distributions can not be fully captured by a finite number of moments in general. Another popular method is to obtain statistical information about the solution of the SDE path-wisely using Monte Carlo simulations \cite{KloedenPlaten1992}. However,  the efficiency of this method is significantly limited by the accuracy and convergence rate of  Monte Carlo simulation. To better quantify the uncertainty propagation and evolution, it is highly desirable to \emph{obtain the  probability density of the solution}, which contains the complete statistical information about the uncertainty.

Fokker-Planck equations, also known as Fokker-Planck-Kolmogorov equations or forward Kolmogorov equations,   are deterministic equations describing how   probability density functions evolve. For It\^{o} or Stratonovich SDEs driven by Brownian motions,  there are well established formula to write down the corresponding Fokker-Planck equations \cite{Klebaner2005,Oksendal2003,LinCai2004}. For example, consider the following It\^{o} SDE 
\begin{equation}\label{s1_000}
  {\rm d} X(t) = f(X(t)){\rm d}t + \sigma(X(t))  {\rm d} B(t), \qquad  X(0) = x_0\in \mathbb{R}^{d},
\end{equation}
where $X(t)=(X_1(t),X_2(t),\cdots,X_d(t))^T \in \mathbb{R}^d$, $f=(f_1, f_2,\cdots,f_d)^T: \mathbb{R}^d\to \mathbb{R}^d$, $\sigma=(\sigma_{ij})_{d\times n}: \mathbb{R}^d \to \mathbb{R}^{d\times n}$. $B(t)$ is an $n$-dimensional Brownian motion,  and $f$ and $g$ satisfy certain smoothness conditions.    The probability density function $p(x,t)$ for the solution $X(t)$ in (\ref{s1_000}) can be expressed as \cite{Klebaner2005}
\begin{align}\label{s1_001}
  \frac{\partial p(x, t)}{\partial t} &= - \sum^{d}_{i=1}\frac{\partial }{\partial x_{i}} \left[f_{i}(x)  p(x, t)\right] + \sum^{d}_{i,j=1} \frac{\partial ^2}{\partial x_{i}\partial x_{j}}\left[D_{ij}(x)p(x, t) \right],
   \end{align}
where $D_{ij}(x)= \sum_{k=1}^n \sigma_{ik}(x)\sigma_{kj}(x)$.

However, up to date,  there is no formula available for the  Fokker-Plank equations corresponding to   SDEs driven by general non-Gaussian L\'{e}vy processes. The reason lies in that Fokker-Planck equations require explicit expressions for the adjoint of the infinitesimal generator associated with  the solution of the SDEs. These explicit forms, however,  are often unavailable for SDEs driven by non-Gaussian L\'{e}vy processes. For It\^{o} SDEs driven by  L\'{e}vy processes, although the general result is unknown, there are some special cases where the corresponding Fokker-Planck equations are  available, e.g.,   Schertzer {\it{et al.}}  \cite{Schertzer2001} derived Fokker-Planck equations for It\^{o} SDEs  driven by $\alpha$-stable processes. There is little existing result for  Fokker-Planck equations for Marcus SDEs. The only published result we can find so far is given by \cite{SunDuan2012}, where the authors have derived Fokker-Plank equations for one-dimensional Marcus SDEs driven by scalar L\'{e}vy processes under the very stringent condition that the noise coefficient is strictly nonzero. However, for Marcus SDEs,  driven by multi-dimensional L\'{e}vy processes and with more general noise coefficients, a form of  Fokker-Planck equations that is accessible for computation still remains  an open problem. 

Fokker-Planck equations have been a widely-used important tool to quantify the propagation of uncertainty in stochastic dynamical systems  driven by Brownian motions\cite{Risken1996,LinCai2004}. In contrast, the unavailability of the Fokker-Planck equations for Marcus SDEs driven by L\'{e}vy processes poses as a significant obstacle on quantifying the uncertainty in stochastic dynamical systems under excitation of non-Gaussian L\'{e}vy noise.  We note that since the Fokker-Planck equations are not available for Marcus SDEs with non-Gaussian L\'{e}vy noise, effort has been made to obtain approximate Fokker-Planck equations by stochastic averaging under the condition of small parameters, see  \cite{ZengZhu2010c} among others.

The \emph{main objective of this paper} is to derive the  Fokker-Planck equations for   multi-dimensional Marcus SDEs  driven by L\'{e}vy processes.

Let $L(t)$ denotes the $\mathbb{R}^n$-valued L\'{e}vy process characterized by the generating triplet ($b, A, \nu$), where $b$ is a vector in $\mathbb{R}^{n}$, $A=(A_{ij})$ is a positive definite  $n$-by-$n$ matrix, and $\nu$ is a measure defined on $\mathbb{R}^{n}\backslash\{0\}$  satisfying 
\begin{align}\label{s1_1}
  \int_{\mathbb{R}^{n}\backslash \{0\}}( |y  |^2 \wedge 1) \nu({\rm d}y) < \infty.
\end{align}
Here $y = (y_{1}, y_{2}, \cdots, y_{n})$ is a vector in $\mathbb{R}^{n}$, $ |y | = \sqrt{y_1^2+y_2^2+\cdots+y_n^2}$ is the usual Euclidean norm of $y$, and the operation '$\wedge$' is defined as $a\wedge b  = {\rm {min}}\{a,b\}$.
By L\'{e}vy-It\^{o} decomposition \cite{Applebaum2009}, the L\'{e}vy process $L(t)$ can be expressed as
\begin{equation}\label{s1_3}
  L(t) = bt + B(t) + \int_{ |y |< 1}y \tilde{N}(t, {\rm d}y) + \int_{ |y |\geq 1}y  N(t, {\rm d}y),
\end{equation}
where $B(t)$ is the Brownian motion with the covariance matrix $A$, and $N(t, {\rm d}y)$ is the Poisson random measure defined as
\begin{equation}\label{s1_4}
  N(t, Q)(\omega) =\#\{s\big| 0\leq s \leq t;\Delta L(s)(\omega)\in Q\},
\end{equation}
with $\#\{\cdot\}$ representing the number of elements in the set '$\cdot$'. $Q$ is a Borel set in $\mathcal{B}(\mathbb{R}\backslash\{0\})$,  $\Delta L(t)$  the jumps of $L(t)$ at time $t$ defined as $\Delta L(t) = L(t) - L(t-)$, and $\tilde N({\rm d}t, {\rm d}y)$ is the compensated Poisson measure defined as $\tilde N({\rm d}t, {\rm d}y) = N({\rm d}t, {\rm d}y)-{\rm d}t\, \nu({\rm d}y)$.

Each component $L_j(t)$ ($j=1,2,\cdots,n$)  of $L(t)$ can be expressed as
\begin{equation}\label{s1_5}
  L_{j}(t) = b_{j}t + \sum_{k}^{n}\tau_{jk}B_{k}(t) + \int_{ |y |< 1}y_j \tilde{N}(t, {\rm d}y) + \int_{ |y |\geq 1}y_j  N(t, {\rm d}y), \quad \quad 1 \leq j \leq n,
\end{equation}
 where $b_{j}$ is the $j$-th component of vector $b$, $B_{k}(t)$ ($k=1,2,\cdots,n$) are independent  standard scalar Brownian motions and $\tau=(\tau_{ij})$ is a $n$-by-$n$ real matrix, which is related to the the covariance matrix $A$ by $A=(A_{ij})=\left(\sum\limits_{k=1}^{n} \tau_{ik} \tau_{kj}\right)$. Note that, throughout of this paper, if $x$ represent an element in $\mathbb{R}^n$, then $x_j$ will be used to denote the $j$-th component of $x$ without further claim.

Consider the SDE  in sense of Marcus,
\begin{equation}\label{s1_6}
  {\rm d} X(t) = f(X(t)){\rm d}t + \sigma(X(t)) \diamond {\rm d} L(t), \qquad  X(0) = x_0\in \mathbb{R}^{d},
\end{equation}
where $X(t)=(X_1(t),X_2(t),\cdots,X_d(t))^T \in \mathbb{R}^d$, $f=(f_1, f_2,\cdots,f_d)^T: \mathbb{R}^d\to \mathbb{R}^d$, $\sigma=(\sigma_{ij})_{d\times n}: \mathbb{R}^d \to \mathbb{R}^{d\times n}$. The solution to the SDE (\ref{s1_6}) is interpreted as
\begin{align}\label{s1_6a}
X(t) =X(0) + \int_0^t f(X(s) ){\rm d}s+ \int_0^t \sigma(X(s-))\diamond {\rm d} L(s),
 \end{align}
where $X(s-)$ is the left limit $\lim\limits_{u<s, u\to s} X(u)$, and "$\diamond$" indicates Marcus integral \cite{Marcus1978,Marcus1981,KurtzPardouxProtter1995,Applebaum2009} defined by
\small
\begin{align}\label{s1_6b}
 \int_0^t \sigma(X_{s-})\diamond {\rm d} L(s)&=
\int_0^t \sigma(X_{s-})  {\rm d}L(s) + \frac{1}{2} \int_0^t \tilde \sigma(X(s-) )  {\rm d} s\nonumber\\
 &\quad + \sum_{0\leq s \leq t} \left[ H(\Delta L(s),  X(s-)) -X(s-) -\sigma(X(s-))\Delta L(s) \right].
 \end{align}
 \normalsize
Here $\tilde \sigma(X(s-))$ is a vector in $\mathbb{R}^d$ with the $i$-th element as
 \begin{align}\label{s1_6c}
 \tilde \sigma_i(X(s-)) = \sum\limits_{m=1}^{d} \sum\limits_{j=1}^{n}\sum\limits_{l=1}^{n} \sigma_{ml} (X(s-))\frac{\partial}{\partial x_m} \sigma_{ij} (X(s-)) A _{lj},\quad i=1,2,\cdots,d.
 \end{align}
 $H: \mathbb{R}^d\times \mathbb{R}^n\to \mathbb{R}^d$ is defined such that, for any $u\in \mathbb{R}^d$ and $v\in \mathbb{R}^n$, $H(u,v)=\Phi(1)$ with $\Phi: \mathbb{R}\to \mathbb{R}^d, r\mapsto \Phi(r)$ being the solution to the initial value problem of the system of ordinary differential equations (ODEs)
 \begin{align}\label{s1_6d}
\dfrac{{\rm d}\Phi(r)}{{\rm d}r}  = \sigma( \Phi (r)) v,\quad
\Phi(0) =u.
\end{align}
It can be written in the form of components as
  \begin{align}\label{s1_6e}
\begin{cases}
\dfrac{{\rm d}\Phi_i(r)}{{\rm d}r}  =\sum\limits_{j=1}^{n} \sigma_{ij}( \Phi_1(r),\Phi_1(r),\cdots,\Phi_d(r)) v_j, \\
\Phi(0) =u_i,\quad\quad \quad i=1,2,\cdots,d.
\end{cases}
\end{align}
The first term in the right hand side of (\ref{s1_6b}) is the stochastic integral in sense of It\^{o}. The second term is the correction term due to the continuous part of $X(t)$, which is actually the correction term due to Stratonovich integral. The last term is the correction term due to the jumps of $X(t)$. Note that Marcus integral reduces to Stratonovich integral when jumps are absent.

The goal of this paper is to derive the  Fokker-Planck equations for the multi-dimensional Marcus SDEs  (\ref{s1_6}) driven by L\'{e}vy processes. Sections of this paper are organized as follows. We introduce the main results in section 2, give the proof of the main results  in section 3, and present some examples in section 4 to illustrate how the main results is  applied to obtain Fokker-Planck equations in some specific applications.

\section{Main Result}

Let $p(x,t\big | X(0)=x_0)$ represent the probability density function for the solution $X(t)$ of the SDE (\ref{s1_6}), and for convenience, we drop  the initial condition and simply denote it by $p(x,t)$.

Throughout this work, we assume the following.
\begin{assumption}[H1]
\quad Probability density function $p(x,t)$ for the solution $X(t)$ defined in (\ref{s1_6}) exists  and  is continuously differentiable with respect to $t$ and twice continuously differentiable with respect to $x$ for $t\in \mathbb{R}$ and $x \in \mathbb{R}^d$.
\end{assumption}
\begin{assumption}[H2]
$f: \mathbb{R}^d \to \mathbb{R}^d$ is continuously differentiable (i.e., $f\in C^1(\mathbb{R}^d, \mathbb{R}^d)$) and $\sigma: \mathbb{R}^d \to \mathbb{R}^{d\times n}$ is Lipschitz and twice continuously differentiable (i.e., $\sigma\in C^2(\mathbb{R}^d, \mathbb{R}^{d\times n}))$.
\end{assumption}

The study of conditions for existence and regularity of the probability density for the solution of SDE (\ref{s1_6}) is  out of the scope of the current paper. We note that existence and regularity of probability densitys for SDEs driven by L\'{e}vy processes are currently active research area. See \cite{PriolaZabczyk2009,BodnarchukKulik2011,SongZhang2015} among others.

Our form of Fokker-Planck equation requires the mapping $\tilde H$ defined below.
\begin{definition}
For $u=(u_1, u_2, \cdots,u_d)^T \in \mathbb{R}^d$ and $v=(v_1, v_2, \cdots,v_n)^T \in \mathbb{R}^n$,  we introduce the mapping  $\tilde H$ that relies on $\sigma=(\sigma_{ij})$, the coefficient of the noise term in the SDE (\ref{s1_6}) such that
\begin{align}\label{s1_7a}
\tilde H: \mathbb{R}^d \times \mathbb{R}^n \to \mathbb{R}^d,  \quad(u,v)\mapsto \tilde H(u,v) = \Psi(1),
\end{align}
where   $\Psi:\mathbb{R}\to\mathbb{R}^d, r\mapsto\Psi(r)$  is the solution of the ODEs
  \begin{align}\label{s1_8a}
\dfrac{{\rm d}\Psi(r)}{{\rm d}r}  = -\sigma( \Psi (r)) v, \quad
\Psi(0) =u.
\end{align}
\end{definition}

\begin{theorem}[Main result]\label{theorem1}
Suppose the assumptions $H1$ and $H2$ hold, then the probability density function $p(x,t)$ for the solution $X(t)$ to the SDE (\ref{s1_6})  satisfies the following equation
\small
\begin{align}\label{s1_9}
  \frac{\partial p(x, t)}{\partial t} &= - \sum^{d}_{i=1}\frac{\partial }{\partial x_{i}}\left[\left(f_{i}(x)  + \sum^{n}_{j=1}\sigma_{ij}(x)b_{j}  + \frac{1}{2}\sum^{d}_{ m=1}\,\sum^{n}_{j,\,l=1}\frac{\partial \sigma_{ij}(x)}{\partial x_{m}}\sigma_{ml}(x)A_{lj}\right)p(x, t)\right] \nonumber\\
   & \quad + \frac{1}{2}\sum^{d}_{i,m=1}\sum^{n}_{j,\,l=1}\frac{\partial^{2}}{\partial x_{i}\partial x_{m}}[\sigma_{ij}(x)\sigma_{ml}(x)A_{jl}p(x, t)] \notag\\
   &\quad + \int_{\mathbb{R}^{n}\backslash \{0\}}\left[p(\tilde {H}(x,y), t) \left|\dfrac{\partial \tilde H(x, y)}{\partial x}\right| - p(x, t) + \sum^{d}_{i=1}\sum^{n}_{j=1}y_{j}I_{|y| < 1}(y)\frac{\partial }{\partial x_{i}}(\sigma_{ij}(x)p(x, t))\right]\nu(dy),
   \end{align}
   \normalsize
   where $(b, A, \nu)$ is the generating triplet of the L\'{e}vy process $L(t)$,  $\tilde H$ is defined in (\ref{s1_7a}) and (\ref{s1_8a}),  and $\left|\dfrac{\partial \tilde H(x, y)}{\partial x}\right|$  is the Jacobian of $\tilde H (x,y)$ with respect to $x$, defined as
\begin{align}\label{sp_9aa}
 \left|\dfrac{\partial \tilde H(x, y)}{\partial x}\right| =
\left|
  \begin{array}{ccc}
    \dfrac{\partial \tilde H_1(x, y)}{\partial x_{1}} & \cdots & \dfrac{\partial\tilde H_1(x, y)}{\partial x_{d}}\\
    \cdots & \cdots & \cdots \\
    \dfrac{\partial \tilde H_d(x, y)}{\partial x_{1}} & \cdots & \dfrac{\partial \tilde H_d(x, y)}{\partial x_{d}}\\
  \end{array}
\right|.
\end{align}
\end{theorem}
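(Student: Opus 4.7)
The plan is to obtain (\ref{s1_9}) as the Kolmogorov forward equation $\partial_t p = \mathcal{A}^* p$, where $\mathcal{A}$ is the infinitesimal generator of the Markov process $X(t)$. I will proceed in three steps: derive $\mathcal{A}$ by rewriting the Marcus SDE in It\^o form and applying It\^o's formula; compute the formal $L^2$-adjoint $\mathcal{A}^*$ using integration by parts for the local part and a change of variables for the non-local part; and conclude by duality against a test function.

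For step one, I will combine the L\'evy-It\^o decomposition (\ref{s1_3}) of $L(t)$ with the Marcus decomposition (\ref{s1_6b}) to recast (\ref{s1_6}) as an It\^o SDE. The resulting continuous drift is $f(x) + \sigma(x) b + \tfrac{1}{2}\tilde\sigma(x) + \int_{|y|<1}[H(x,y) - x - \sigma(x)y]\,\nu(dy)$ (the last integral converges because Taylor-expanding (\ref{s1_6d}) in $y$ gives $H(x,y) - x - \sigma(x)y = O(|y|^2)$), the diffusion coefficient driving $B(t)$ is $\sigma(x)$, and the jumps of $X$ are $\Delta X(s) = H(X(s-),\Delta L(s)) - X(s-)$. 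Applying It\^o's formula to $\phi(X(t))$ for $\phi \in C_c^\infty(\mathbb{R}^d)$ and using the analogous pointwise estimate $\phi(H(x,y)) - \phi(x) - \sigma(x)y\cdot\nabla\phi(x) = O(|y|^2)$ to absorb the small-jump drift correction into the non-local term produces
\begin{align*}
\mathcal{A}\phi(x) &= \sum_{i=1}^d\Big(f_i(x) + \sum_{j=1}^n \sigma_{ij}(x) b_j + \tfrac{1}{2}\tilde\sigma_i(x)\Big)\partial_i \phi(x) + \tfrac{1}{2}\sum_{i,m=1}^d\sum_{j,l=1}^n \sigma_{ij}(x)\sigma_{ml}(x) A_{jl}\,\partial_i\partial_m \phi(x) \\
&\quad+ \int_{\mathbb{R}^n \setminus \{0\}}\!\Big[\phi(H(x,y)) - \phi(x) - I_{|y|<1}\sum_{i,j}\sigma_{ij}(x) y_j\,\partial_i\phi(x)\Big]\nu(dy).
\end{align*}

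For step two, the adjoint of the drift-diffusion part is the usual $-\sum_i \partial_i(\text{drift}_i\cdot p) + \tfrac{1}{2}\sum_{i,m}\partial_i\partial_m(\text{diffusion}_{im}\cdot p)$. For the non-local part, I pair $\mathcal{A}\phi$ against $p$, fix $y$, and perform the change of variables $z = H(x,y)$ in the integral containing $\phi(H(x,y))p(x)$. The crucial geometric fact is that $\tilde H(\cdot,y)$ defined by (\ref{s1_7a})--(\ref{s1_8a}) is precisely the inverse of $H(\cdot,y)$: if $\Phi(r)$ solves (\ref{s1_6d}) with $\Phi(0) = x$, then $\Psi(r) := \Phi(1-r)$ solves (\ref{s1_8a}) with $\Psi(0) = H(x,y)$, so $\tilde H(H(x,y),y) = \Psi(1) = x$. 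Consequently
$$\int_{\mathbb{R}^d}\phi(H(x,y))\,p(x)\,dx = \int_{\mathbb{R}^d}\phi(z)\,p(\tilde H(z,y))\left|\frac{\partial \tilde H(z,y)}{\partial z}\right|dz,$$
and integration by parts against the $\sigma(x)y\cdot\nabla\phi(x)$ compensator produces the $\sum_{i,j} y_j \partial_i(\sigma_{ij}(x)p(x))$ summand appearing in (\ref{s1_9}). Step three is routine: $\frac{d}{dt}\int \phi\, p\,dx = \int \mathcal{A}\phi\cdot p\,dx = \int \phi\cdot\mathcal{A}^*p\,dx$ for every test $\phi$ forces $\partial_t p = \mathcal{A}^* p$ (pointwise, by H1).

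The main obstacle is the change-of-variables argument in step two: justifying that $\tilde H(\cdot,y)$ is a global $C^1$-diffeomorphism with well-defined Jacobian relies on assumption H2 ($\sigma$ Lipschitz and $C^2$), which guarantees existence, uniqueness, and $C^1$ dependence on initial data of the flows (\ref{s1_6d}) and (\ref{s1_8a}). A secondary bookkeeping subtlety in step one is that neither the small-jump drift correction nor the integral $\int_{|y|<1}[\phi(H(x,y))-\phi(x)]\,\nu(dy)$ is absolutely convergent on its own when $\nu$ fails to integrate $|y|$ near the origin; only the specific combinations that appear in the It\^o drift and in the $\nu$-integrand of (\ref{s1_9}) are convergent, which is what dictates the precise form of the Fokker-Planck equation.
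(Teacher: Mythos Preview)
Your proposal is correct and follows essentially the same route as the paper: rewrite the Marcus SDE in It\^o form, apply It\^o's formula to a test function, take expectations, integrate by parts on the drift and diffusion terms, and handle the non-local term via the change of variables $z=H(x,y)$ with inverse $x=\tilde H(z,y)$; the paper isolates the invertibility and $C^1$-regularity of $H(\cdot,y)$ as a separate Lemma (its Lemma~\ref{lemma1}), while you prove it inline via the time-reversal $\Psi(r)=\Phi(1-r)$, which is the same argument. Your explicit $O(|y|^2)$ bookkeeping near $y=0$ is a useful addition that the paper leaves implicit.
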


\section{Proof of the main result}

The following lemma reveals the relationship between the mapping $H$, as  defined  in (\ref{s1_6b}) and (\ref{s1_6d}), and $\tilde H$, as defined in (\ref{s1_7a}) and (\ref{s1_8a}).
\begin{lemma}\label{lemma1}
If  $\sigma: \mathbb{R}^d \to \mathbb{R}^{d\times n}$ is Lipschitz continuous and continuously differentiable (i.e., $\sigma\in C^1 (\mathbb{R}^d,\mathbb{R}^{d\times n})$), then\\
(i) for $\forall v\in \mathbb{R}^n$,  $H(\cdot,v): \mathbb{R}^d \to \mathbb{R}^d$ is invertible with the inverse mapping as $\tilde H (\cdot,v): \mathbb{R}^d \to \mathbb{R}^d$, i.e., for all $u \in \mathbb{R}^d$ and $v \in \mathbb{R}^n$,
\begin{align}\label{s1_8b}
\tilde H(H(u,v),v)=  H(\tilde H(u,v),v) =u;
\end{align}
(ii) both $H(\cdot,v)$ and $\tilde H(\cdot,v)$ are continuously differentiable.
\end{lemma}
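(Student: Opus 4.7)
The plan is to treat both $H(\cdot,v)$ and $\tilde H(\cdot,v)$ as time-$1$ maps of the flows generated by the autonomous ODEs $\Phi' = \sigma(\Phi)v$ and $\Psi' = -\sigma(\Psi)v$ respectively (with $v \in \mathbb{R}^n$ treated as a fixed parameter), and then exploit the fact that $-\sigma(\,\cdot\,)v$ is exactly the negative of the vector field defining $\Phi$. Since $\sigma$ is Lipschitz, the standard Picard--Lindel\"of theorem guarantees that each of these ODEs has a unique solution on $[0,1]$ for every initial condition, so $H(\cdot,v)$ and $\tilde H(\cdot,v)$ are well defined as maps $\mathbb{R}^d \to \mathbb{R}^d$.

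For part (i), fix $u \in \mathbb{R}^d$ and $v \in \mathbb{R}^n$ and let $\Phi$ solve $\Phi'(r) = \sigma(\Phi(r))v$ with $\Phi(0) = u$, so that $H(u,v) = \Phi(1)$. Define the time-reversed curve $\Psi(r) := \Phi(1-r)$ for $r \in [0,1]$. A direct computation shows
\begin{align*}
\Psi'(r) = -\Phi'(1-r) = -\sigma(\Phi(1-r))v = -\sigma(\Psi(r))v, \qquad \Psi(0) = \Phi(1) = H(u,v).
\end{align*}
Thus $\Psi$ is the unique solution of the ODE defining $\tilde H(\,\cdot\,,v)$ with initial value $H(u,v)$, so $\tilde H(H(u,v),v) = \Psi(1) = \Phi(0) = u$. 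The reverse identity $H(\tilde H(u,v),v) = u$ is obtained by the same argument with the roles of $\sigma v$ and $-\sigma v$ interchanged. This establishes (\ref{s1_8b}) and hence invertibility.

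For part (ii), I would invoke the classical theorem on smooth dependence of ODE solutions on initial data: if the right-hand side is $C^1$ in the state variable (which holds here because $\sigma \in C^1$, so $u \mapsto \sigma(u)v$ is $C^1$ for every fixed $v$), then the flow map $u \mapsto \Phi(r;u,v)$ is $C^1$ in $u$ on its domain of existence. Since the Lipschitz hypothesis on $\sigma$ ensures global existence on $[0,1]$, the time-$1$ map $H(\cdot,v) = \Phi(1;\cdot,v)$ is $C^1$ on all of $\mathbb{R}^d$; the identical argument applied to $-\sigma v$ yields the continuous differentiability of $\tilde H(\cdot,v)$.

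I do not expect any serious obstacle here: the time-reversal trick immediately identifies $\tilde H(\cdot,v)$ as the inverse flow, and all smoothness claims are consequences of the standard variational-equation machinery for ODEs applied pointwise in $v$. The only point requiring a little care is to note that the Lipschitz assumption on $\sigma$ is what keeps the interval of existence uniform (namely $[0,1]$) across all initial conditions $u$, so that $H(\cdot,v)$ and $\tilde H(\cdot,v)$ are defined and smooth as maps from all of $\mathbb{R}^d$ rather than just locally.
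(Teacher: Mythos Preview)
Your proposal is correct and follows essentially the same approach as the paper: both treat $H(\cdot,v)$ and $\tilde H(\cdot,v)$ as time-$1$ maps of the ODE flows for $\pm\sigma(\cdot)v$, use the Lipschitz hypothesis for global existence and uniqueness, identify the two maps as mutual inverses via time reversal, and obtain $C^1$ regularity from the standard theorem on smooth dependence on initial data. The only cosmetic difference is that the paper phrases the invertibility through the flow group law $\xi_{r_1+r_2}=\xi_{r_1}\circ\xi_{r_2}$ (so $\xi_1^{-1}=\xi_{-1}$), whereas you verify it by the explicit substitution $\Psi(r)=\Phi(1-r)$; these are the same computation.
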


\begin{proof}[Proof of Lemma \ref{lemma1}]
Note that $\sigma$ is Lipschitz continuous, then the solution of the ODE (\ref{s1_6d}) or (\ref{s1_8a}) exists globally and is unique.

Let $\xi: \mathbb{R}\times\mathbb{R}^d\to \mathbb{R}^d, (r,u)\mapsto \xi(r,u)=\Phi(r)$ represent the flow of ODE (\ref{s1_6d}) and $\xi_r: \mathbb{R}^d \to \mathbb{R}^d, u\mapsto \xi(r,u)$ be the time-$r$ mapping. The uniqueness of the ODE solution implies that
\begin{align}\label{s1_8c}
\xi_0=u, \quad\xi_{r_1+r_2}=\xi_{r_1}\circ\xi_{r_2}, \quad \forall r_1, r_2 \in \mathbb{R}.
\end{align}
(\ref{s1_8c}) indicates that time-$u$ mapping $\xi_r$ is invertible and $\xi_r^{-1}=\xi_{-r}$. Moreover, it is well known that the time-$r$ mapping associated with (\ref{s1_6d}) or (\ref{s1_8a}) is as smooth as $\sigma$, which follows from the dependence of the ODE solution on the initial value \cite{Chicone2006}.

Since for all $v \in \mathbb{R}^n$, $H(\cdot,v)$ and $\tilde H(\cdot,v)$ are actually time-$1$ mapping associated with (\ref{s1_6d}) and (\ref{s1_8a}), respectively, and  the flow for (\ref{s1_8a}) is exactly the time reversal version of that for (\ref{s1_6d}), we get the Lemma immediately.
\end{proof}

The following Lemma can be found in many textbooks on theory of distributions. See  \cite{Bhattacharyya2012} among others.
\begin{lemma}\label{lemma2}
Suppose $\gamma_1 \in C^0 (\mathbb{R}^n)$ and $\gamma_2 \in C^0(\mathbb{R}^n)$, if $\forall \phi\in C_0^\infty(\mathbb{R}^n$, $\int_{\mathbb{R}^n}\phi(x) \gamma_1(x)\,{\rm d}x = \int_{\mathbb{R}^n}\phi(x) \gamma_2(x)\,{\rm d}x$, then $\forall x \in \mathbb{R}^n$, $\gamma_1(x) = \gamma_2(x)$.
\end{lemma}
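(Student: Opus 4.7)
The plan is to prove this by contradiction via a standard bump-function argument (the du Bois-Reymond / fundamental lemma of the calculus of variations). First I would reduce to a single function: setting $\gamma = \gamma_1 - \gamma_2$, linearity of the integral together with the hypothesis yields $\int_{\mathbb{R}^n}\phi(x)\gamma(x)\,dx = 0$ for every $\phi \in C_0^\infty(\mathbb{R}^n)$, and continuity of $\gamma_1,\gamma_2$ forces $\gamma \in C^0(\mathbb{R}^n)$. The task then becomes to show that a continuous function which integrates to zero against every smooth compactly supported test function must vanish identically.

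Next I would argue by contradiction. Suppose there exists $x_0 \in \mathbb{R}^n$ with $\gamma(x_0) \neq 0$. Replacing $\gamma$ by $-\gamma$ if necessary, assume $\gamma(x_0) > 0$. By continuity of $\gamma$ at $x_0$, there exists $r > 0$ such that $\gamma(x) > \gamma(x_0)/2$ for every $x$ in the open ball $B_r(x_0) = \{x \in \mathbb{R}^n : |x - x_0| < r\}$.

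The key step is then to exhibit a specific test function $\phi \in C_0^\infty(\mathbb{R}^n)$ with $\phi \geq 0$ on $\mathbb{R}^n$, $\operatorname{supp}(\phi) \subset \overline{B_r(x_0)}$, and $\phi(x) > 0$ for every $x \in B_r(x_0)$. The standard choice is the radial mollifier
\begin{equation*}
\phi(x) = \begin{cases} \exp\!\left(-\dfrac{1}{r^2 - |x - x_0|^2}\right), & |x - x_0| < r, \\[4pt] 0, & |x - x_0| \geq r. \end{cases}
\end{equation*}
Evaluating the test integral against this $\phi$ gives
\begin{equation*}
0 \;=\; \int_{\mathbb{R}^n} \phi(x)\gamma(x)\,dx \;=\; \int_{B_r(x_0)} \phi(x)\gamma(x)\,dx \;\geq\; \frac{\gamma(x_0)}{2}\int_{B_r(x_0)} \phi(x)\,dx \;>\; 0,
\end{equation*}
which is the desired contradiction. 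Hence $\gamma(x) = 0$ for all $x \in \mathbb{R}^n$, i.e., $\gamma_1 \equiv \gamma_2$.

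The only technical point requiring care will be verifying that the candidate bump function is genuinely of class $C^\infty(\mathbb{R}^n)$, in particular that every partial derivative exists and vanishes at the spherical boundary $|x-x_0| = r$. This follows from the classical fact that $t \mapsto e^{-1/t}$ (extended by $0$ at $t \leq 0$) is smooth with all derivatives vanishing at $t=0$, because the exponential decay dominates every rational blow-up of derivatives in $t$. Once that is recorded, the rest of the argument is immediate, and no further machinery is needed.
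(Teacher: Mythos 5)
Your proof is correct. Note that the paper does not actually prove this lemma --- it only cites a textbook on the theory of distributions \cite{Bhattacharyya2012} --- so there is no in-paper argument to compare against; your bump-function contradiction is precisely the standard proof (the fundamental lemma of the calculus of variations) that such a reference would supply, and every step, including the smoothness of the radial mollifier and the strict positivity of the final integral, is sound.
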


\begin{proof}[Proof of Theorem \ref{theorem1}]

It follows from (\ref{s1_3}) and (\ref{s1_6})  that, for $1\le i \le d$,
\begin{align}\label{sp_1}
  dX_{i}(t) &= f_{i}(X(t))\,{\rm d}t + \sum^{n}_{j=1}\sigma_{ij}(X(t-))b_{j}\,{\rm d}t +  \sum^{n}_{j,k=1}\sigma_{ij}(X(t-))\tau_{jk}\,{\rm d} B_{k}(t)\notag\\
   &\quad + \frac{1}{2}\sum^{n}_{j,l=1}\sum^{d}_{m=1}\sigma_{ml}(X(t-))A_{lj}\dfrac{\partial }{\partial x_{m}}\sigma_{ij}(X(t-))dt\notag\\
   &\quad+ \int_{|y|<1} [H_{i}(X(t-), y) - X_{i}(t-))]\tilde{N}({\rm d}t, \,{\rm d}y)\notag\\
   &\quad+ \int_{|y|\geq 1} [H_{i}(X(t-), y) - X_{i}(t-))]N(\,{\rm d}t, \,{\rm d}y)\notag\\
   &\quad+ \int_{|y|<1} [H_{i}(X(t-), y) - X_{i}(t-)) - \sum^{n}_{j=1}\sigma_{ij}(X(t-))y_{j}]\nu(\,{\rm d}y)\,{\rm d}t.
\end{align}
By using  (\ref{sp_1}) and the It\^{o} formula, $\forall \varphi\in C_0^\infty(\mathbb{R}^d)$, we can get
\begin{align}\label{sp_2}
 &  \varphi(X(t+ \Delta t)) - \varphi(X(t))\notag\\
   = & \int^{t+ \Delta t}_{t}\sum^{d}_{i=1}\left(\dfrac{\partial }{\partial x_i} \varphi(X(s-)\right)f_{i}(X(s))\,{\rm d}s +  \int^{t+ \Delta t}_{t}\sum^{d}_{i=1}\sum^{n}_{j=1}\left( \dfrac{\partial }{\partial x_i} \varphi(X(s-))\right)\left(\sigma_{ij}(X(s-))b_{j}\right)\,{\rm d}s \notag\\
   + &  \int^{t+\Delta t}_{t}\sum^{d}_{i=1}\left( \dfrac{\partial}{\partial x_i}  \varphi(X(s-))\right) \sum^{n}_{j,k=1}\sigma_{ij}(X(s-))\tau_{jk}\;{\rm d}B_{k}(s)\notag\\
   + & \dfrac{1}{2}\int^{t+\Delta t}_{t}\sum^{d}_{i,m=1}\sum^{n}_{j,l=1}\left(\dfrac{\partial }{\partial x_{i} } \varphi(X(s-))\right) \left(\dfrac{\partial}{\partial x_{m}}  \sigma_{ij}(X(s-))\right)\sigma_{ml}(X(s-)) A_{lj}\,{\rm d}s\notag \\
   + & \frac{1}{2}\int^{t+\Delta t}_{t}\sum^{d}_{i,m=1}\sum^{n}_{j,l=1}\left(\dfrac{\partial^{2}}{\partial x_{i} \partial x_{m}} \varphi(X(s-)\right) \sigma_{ij}(X(s-) \sigma_{ml}(X(s-))A_{jl}\,{\rm d}s\notag\\
   + & \int^{t+\Delta t}_{t}\int_{ |y |<1} \left[\varphi(H( X(s-),y)) - \varphi(X(s-))\right]\tilde{N}({\rm d}s, \,{\rm d}y)\notag\\
   + &  \int^{t+\Delta t}_{t}\int_{ |y |\geq 1} \left[\varphi(H( X(s-),y)) - \varphi(X(s-))\right]N({\rm d}s, {\rm d}y)\notag\\
   + & \int^{t+\Delta t}_{t}\int_{ |y |<1} \left[ \varphi(H( X(s-),y))- \varphi(X(s-)) - \sum^{d}_{i=1}\left(\dfrac{\partial }{\partial x_{i} } \varphi(X(s-))\right) \sigma_{ij}(X(s-))y_{j}\right]\nu({\rm d}y){\rm d}s.
\end{align}
Taking expectation at both sides of  (\ref{sp_2}), we get
\begin{align}\label{sp_3}
   & \int_{\mathbb{R}^{d}}\varphi(x)p(x, t+\Delta t) \,{\rm d}x - \int_{\mathbb{R}^{d}}\varphi(x)p(x, t)\,{\rm d}x \notag\\
  = & \int^{t+\Delta t}_{t}\int_{\mathbb{R}^{d}}\sum^{d}_{i=1}\dfrac{\partial \varphi(x)}{\partial x_{i}}f_{i}(x)p(x, s)\,{\rm d}x\,{\rm d}s + \int^{t+\Delta t}_{t}\int_{\mathbb{R}^{d}}\sum^{d}_{i=1}\dfrac{\partial \varphi(x)}{\partial x_{i}}\left(\sum^{n}_{j=1}\sigma_{ij}(x)b_{j}\right)p(x, s)\,{\rm d}x\,{\rm d}s \notag\\
+ & \frac{1}{2}\int^{t+\Delta t}_{t}\int_{\mathbb{R}^{d}}\sum^{d}_{i,m=1}\sum^{n}_{j,l=1}\dfrac{\partial \varphi(x)}{\partial x_{i}} \dfrac{\partial \sigma_{ij}(x)}{\partial x_{m}}\sigma_{ml}(x)A_{lj} p(x,s) \,{\rm d}x\,{\rm d}s\notag \\
+ & \frac{1}{2}\int^{t+\Delta t}_{t}\int_{\mathbb{R}^{d}}\sum^{d}_{i,m=1}\sum^{n}_{j,l=1}\dfrac{\partial^{2}\varphi(x)}{\partial x_{i}\partial x_{m}} \sigma_{ij}(x)\sigma_{ml}(x)A_{jl} p(x,s)\,{\rm d}x\,{\rm d}s\notag\\
+ & \int^{t+\Delta t}_{t} \int_{\mathbb{R}^{d}} \int_{|y|\geq 1}\left[\varphi(H(x,y)) - \varphi(x)\right]p(x,s) \,\nu({\rm d}y)\,{\rm d}x \,{\rm d}s\notag\\
+ & \int^{t+\Delta t}_{t} \int_{\mathbb{R}^{d}} \int_{|y|<1}  \left[\varphi(H(x,y)) - \varphi(x) - \sum^{d}_{i=1} \sum^{n}_{j=1}\frac{\partial \varphi(x)}{\partial x_{i}} \sigma_{ij}(x)y_{j} \right]p(x, s)\,\nu({\rm d}y)\,{\rm d}x\,{\rm d}s.
\end{align}
To obtain (\ref{sp_3}), we have changed the orders of integrals  justified by Fubini's theorem, since $\varphi\in C_0^\infty(\mathbb{R})$. Moreover, we rely on the following facts for getting (\ref{sp_3}),
\begin{equation}\label{sp_4}
  \mathbf{E}\left\{\int^{t+\Delta t}_{t}\sum^{d}_{i=1} \sum^{n}_{j,k=1}\frac{\partial}{\partial x_{i} } \left( \varphi(X(s-))\right) \sigma_{ij}(X(s-) \tau_{jk}\,{\rm d}B_{k}(s))\right\} = 0,
\end{equation}
\begin{equation}\label{sp_5}
 \mathbf{ E} \left\{\int_{|y|<1} [\varphi(H(X(s-), y)) - \varphi(X(s-))]\tilde{N}({\rm d}s, {\rm d}y)\right\} = 0,
\end{equation}
\begin{equation}\label{sp_5a}
 \mathbf{ E} \left\{\int_{|y|\ge 1} [\varphi(H(X(s-), y)) - \varphi(X(s-))]\tilde{N}({\rm d}s, {\rm d}y)\right\} = 0,
\end{equation}
and
\begin{align}\label{sp_6}
   & \mathbf{E}\left\{\int^{t+\Delta t}_{t}\int_{|y|\geq 1} \left[\varphi(H(X(s-), y)) - \varphi(X(s-))\right]\,N({\rm d}s, {\rm d}y)\right\}\notag\\
  = & \mathbf E\left\{\int^{t+\Delta t}_{t}\int_{|y|\geq 1}\left[\varphi(H(X(s-), y)) - \varphi(X(s-)) \right] \, \nu({\rm d}y)\,{\rm d}s\right\}\notag\\
= & \int^{t+\Delta t}_{t}\int_{\mathbb{R}^{d}}\int_{|y|\geq 1} [\varphi(H(X(s-), y)) - \varphi(x)]p(x, t) \,\nu({\rm d}y)\,{\rm d}x\, {\rm d}s.
\end{align}
The first identity in (\ref{sp_6}) follows from (\ref{sp_5a}).

Dividing both sides of (\ref{sp_3}) by $\Delta t$, and taking the limit of $\Delta t\to 0$, we deduce that
\begin{align}\label{sp_7}
   & \int_{\mathbb{R}^{d}}\varphi(x)\frac{\partial p(x, t)}{\partial t}\,{\rm d}x  \notag\\
  = & \int_{\mathbb{R}^{d}}\sum^{d}_{i=1}\frac{\partial \varphi(x)}{\partial x_{i}}f_{i}(x)p(x, t)dx + \int_{\mathbb{R}^{d}}\sum^{d}_{i=1} \frac{\partial \varphi(x)}{\partial x_{i}}\left(\sum^{n}_{j=1}\sigma_{ij}(x)b_{j}\right)p(x, t)\,{\rm d}x \notag\\
+ & \frac{1}{2}\int_{\mathbb{R}^{d}}\sum^{d}_{i,m=1}\frac{\partial \varphi(x)}{\partial x_{i}}\left(\sum^{n}_{j,l=1}\frac{\partial \sigma_{ij}(x)}{\partial x_{m}}\sigma_{ml}(x)A_{lj}\right)p(x, t)\,{\rm d}x\notag \\
+ & \frac{1}{2}\int_{\mathbb{R}^{d}}\sum^{d}_{i,m=1}\frac{\partial^{2}\varphi(x)}{\partial x_{i}\partial x_{m}}\left(\sum^{n}_{j,l=1}\sigma_{ij}(x)\sigma_{ml}(x)A_{jl}\right)p(x, t)\,{\rm d}x\notag\\
+ & \int_{\mathbb{R}^{d}}\int_{\mathbb{R}^{n}\backslash \{0\}}\left[\varphi(H(x,y)) - \varphi(x) - \sum^{d}_{i=1}\frac{\partial \varphi(x)}{\partial x_{i}}\left(\sum^{n}_{j=1}\sigma_{ij}(x)y_{j}\right)I_{|y| < 1}(y)\right]p(x, t)\,\nu(\,{\rm d}y)\,{\rm d}x.
\end{align}
 Using integration by parts,  we rewrite the first four terms at the right-hand side (RHS) of Eq.(\ref{sp_7})
\begin{align}\label{sp_8}
   & \int_{\mathbb{R}^{d}}\sum^{d=1}_{i}\frac{\partial \varphi(x)}{\partial x_{i}}f_{i}(x)p(x, t)\,{\rm d}x + \int_{\mathbb{R}^{d}}\sum^{d}_{i=1}\frac{\partial \varphi(x)}{\partial x_{i}}\left(\sum^{n}_{j=1}\sigma_{ij}(x)b_{j}\right)p(x, t)\,{\rm d}x \notag\\
+ & \frac{1}{2}\int_{\mathbb{R}^{d}}\sum^{d}_{i,m=1}\frac{\partial \varphi(x)}{\partial x_{i}}\left(\sum^{n}_{j,l=1}\frac{\partial \sigma_{ij}(x)}{\partial x_{m}}\sigma_{ml}(x)A_{lj}\right)p(x, t)dx\notag  \\
  = & -\int_{\mathbb{R}^{d}}\varphi(x)  p(x, t) \sum^{d}_{i=1}\frac{\partial }{\partial x_{i}}\left(f_{i}(x) + \sum^{n}_{j=1}\sigma_{ij}(x)b_{j} + \frac{1}{2}\sum^{n}_{j,l=1}\sum^{d}_{m=1}\frac{\partial \sigma_{ij}(x)}{\partial x_{m}}\sigma_{ml}(x)A_{lj}\right)\,{\rm d}x,
\end{align}
and
\begin{align}\label{sp_9}
   & \frac{1}{2}\int_{\mathbb{R}^{d}}\left(\sum^{d}_{i,m=1}\sum^{n}_{j,l=1}\frac{\partial^{2}\varphi(x)}{\partial x_{i}\partial x_{m}} \sigma_{ij}(x)\sigma_{ml}(x)A_{jl}\right)p(x, t)\,{\rm d}x \notag\\
  = & \frac{1}{2}\int_{\mathbb{R}^{d}}\varphi(x)\sum^{d}_{i,m=1}\sum^{n}_{j,l=1} \frac{\partial^{2}}{\partial x_{i}\partial x_{m}} \left( \sigma_{ij}(x)\sigma_{ml}(x)A_{jl}\,p(x, t) \right)\,{\rm d}x.
\end{align}
The last integral in the RHS of (\ref{sp_7}) becomes
\begin{align}\label{sp_10}
   & \int_{\mathbb{R}^{d}}\,{\rm d}x \int_{\mathbb{R}^{n}\backslash \{0\}}\left[\varphi(H(x,y)) - \varphi(x) - \sum^{d}_{i}\frac{\partial \varphi(x)}{\partial x_{i}}\left(\sum^{n}_{j}\sigma_{i,j}(x)y_{j}\right)I_{|y| < 1}(y)\right]p(x, t)\nu(\,{\rm d}y) \notag \\
  = & \int_{\mathbb{R}^{n}\backslash \{0\}}\,\nu(\,{\rm d}y)\int_{\mathbb{R}^{d}} \left[\varphi(H(x,y)) - \varphi(x) - \sum^{d}_{i}\frac{\partial \varphi(x)}{\partial x_{i}}\left(\sum^{n}_{j}\sigma_{i,j}(x)y_{j}\right)I_{|y| < 1}(y)\right]p(x, t)\,{\rm d}x.
\end{align}

 Let $z = H(x, y)$. By Lemma 1, we have  $x=\tilde{H}(z, y)$. Making the changing the variable $x=\tilde H(z,y)$,  we have
\begin{align}\label{sp_11}
    \int_{\mathbb{R}^{d}}\varphi(H(x, y))p(x, t)\,{\rm d}x & = \int_{\mathbb{R}^{d}}\varphi(z)p(\tilde{H}(z, y), t) \left|\dfrac{\partial \tilde H(z, y)}{\partial z}\right| \,{\rm d}z,
\end{align}
where $ \left |\dfrac{ \partial \tilde H(z,y)}{\partial z}\right |$ is the Jacobian of $\tilde H(z,y)$ with respect to $z$. Also, we have
\begin{align}\label{sp_13}
   &  \int_{\mathbb{R}^{d}} \sum^{d}_{i=1}\frac{\partial \varphi(x)}{\partial x_{i}}\left(\sum^{n}_{j=1}\sigma_{ij}(x)y_{j}I_{|y| < 1}(y)\right)p(x, t)\,{\rm d}x \notag \\
  = & -\int_{\mathbb{R}^{d}} \varphi(x)\sum^{d}_{i=1}\frac{\partial }{\partial x_{i}}\left[\sum^{n}_{j=1}\sigma_{ij}(x)y_{j}I_{|y| < 1}(y)p(x, t)\right]\,{\rm d}x.
\end{align}
Substituting  (\ref{sp_11}) and (\ref{sp_13}) into  (\ref{sp_10}), we get
\small
\begin{align}\label{sp_14}
   & \int_{\mathbb{R}^{d}}\,{\rm d}x \int_{\mathbb{R}^{n}\backslash\{0\}}\left[\varphi(H(x,y)) - \varphi(x)  - \sum^{d}_{i=1}\frac{\partial \varphi(x)}{\partial x_{i}}\left(\sum^{n}_{j=1}\sigma_{ij}(x)y_{j}\right)I_{|y| < 1}(y)\right]p(x, t)\,\nu({\rm d}y) \notag \\
  = & \int_{\mathbb{R}^{d}}\,{\rm d}x \int_{\mathbb{R}^{n}\backslash \{0\}}\varphi(x)\left[p(\tilde{H}(x, y), t) \left|\dfrac{\partial \tilde H(x, y)}{\partial x}\right| - p(x, t) + \sum^{d}_{i=1}\frac{\partial }{\partial x_{i}}\left(\sum^{n}_{j=1}\sigma_{ij}(x)y_{j}I_{|y| < 1}(y)p(x, t)\right)\right]\nu(\,{\rm d}y).
\end{align}
\normalsize
Substituting (\ref{sp_8}), (\ref{sp_9})and  (\ref{sp_14}) into (\ref{sp_7}), we get
\begin{align}
   & \int_{\mathbb{R}^{d}}\varphi(x)\frac{\partial p(x, t)}{\partial t}\,{\rm d}x  \notag\\
  = &  -\int_{\mathbb{R}^{d}}\varphi(x)\sum^{d}_{i=1}\frac{\partial }{\partial x_{i}}\left[\left(f_{i}(x)  + \sum^{n}_{j=1}\sigma_{ij}(x)b_{j} + \frac{1}{2}\sum^{d}_{m=1}\sum^{n}_{j,l=1}\frac{\partial \sigma_{ij}(x)}{\partial x_{m}}\sigma_{ml}(x)A_{lj}\right)p(x, t)\right]\,{\rm d}x\notag\\
+ & \frac{1}{2}\int_{\mathbb{R}^{d}}\varphi(x)\sum^{d}_{i,m=1}\frac{\partial^{2}}{\partial x_{i}\partial x_{m}}\left(\sum^{n}_{j,l=1}\sigma_{ij}(x)\sigma_{ml}(x)A_{jl}\,p(x, t)\right)\,{\rm d}x\notag\\
+ & \int_{\mathbb{R}^{d}}\varphi(x)\,{\rm d}x\int_{\mathbb{R}^{n}\backslash \{0\}}\left[p(\tilde{H}(x, y), t) \left|\dfrac{\partial \tilde H(x, y)}{\partial x}\right| - p(x, t) + \sum^{d}_{i=1}\frac{\partial }{\partial x_{i}}\left(\sum^{n}_{j=1}\sigma_{ij}(x)y_{j}I_{|y| < 1}(y)p(x, t)\right)\right]\nu({\rm d}y).
\end{align}
The main result (\ref{s1_9})  follows from Lemma \ref{lemma2}.
\end{proof}

\section{Examples}
According to the main results presented in Theorem \ref{theorem1}, for a given SDE (\ref{s1_6}) driven by the L\'{e}vy process $L(t)$ with the generating triplet $(b, A, \nu)$, we only need solving the ODE (\ref{s1_8a}) for the   mapping $\tilde H$ to obtain the corresponding Fokker-Planck equation.

The following lemma, which can be verified easily from the definition, will be used in the examples to obtain the generating triplet of an $\mathbb{R}^n$-valued L\'{e}vy process given each of its component being a scalar L\'{e}vy process with known triplet and being independent of each other.
\begin{lemma}\label{lemma3}
Let $\{L_i (t), i=1, 2, \cdots, n\}$ be n independent scalar  L\'{e}vy processes with $L_i(t)$ having the generating triplet $(b_i, A_i, \nu_i)$. Then the $\mathbb{R}^n$-valued process $L(t)=(L_1(t), L_2(t), \cdots, L_n(t))$ is a $\mathbb{R}^n$-valued L\'{e}vy process with   the generating triplet $(b, A, \nu)$   and  $b=(b_1, b_2, \cdots, b_n)$, $A=\mathrm{diag}(A_1, A_2, \cdots, A_n)$, and $\nu({\rm d}x_1,{\rm d}x_2,\cdots,{\rm d}x_n ) = \sum\limits_{i=1}^{n}\left( \nu_i({\rm d}x_i) \left(\prod \limits_{k=1, k\ne i }^n  \delta_0( {\rm d}x_k)\right) \right)$, where $\delta_0(\cdot)$ is the Dirac measure on $\mathbb{R}$ centered at $0$.
\end{lemma}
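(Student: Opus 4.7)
The plan is to verify the statement by computing the characteristic function of $L(t)$ via independence of the components and then matching it term-by-term with the L\'{e}vy-Khintchine representation associated with the proposed triplet $(b,A,\nu)$.

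First I would verify that $L(t)=(L_1(t),\dots,L_n(t))$ is itself an $\mathbb{R}^n$-valued L\'{e}vy process. The properties $L(0)=0$, c\`{a}dl\`{a}g sample paths, and stochastic continuity in probability hold componentwise and therefore pass to the vector. Stationarity of increments is immediate from the scalar stationarity. For independence of increments over disjoint intervals, the increments of each $L_i$ over those intervals are independent by the scalar L\'{e}vy property, and by the standing hypothesis the whole families indexed by different $i$ are mutually independent, so the vector increments factor as products of mutually independent scalar random variables and are therefore independent.

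Next I would use independence of the components to factor the characteristic function. For $u=(u_1,\dots,u_n)\in\mathbb{R}^n$,
$$\mathbb{E}\bigl[e^{i\langle u,L(t)\rangle}\bigr]=\prod_{i=1}^{n}\mathbb{E}\bigl[e^{iu_iL_i(t)}\bigr],$$
and each scalar factor is
$$\exp\!\Bigl(t\Bigl[ib_iu_i-\tfrac{1}{2}A_iu_i^2+\int_{\mathbb{R}\setminus\{0\}}\bigl(e^{iu_iz}-1-iu_iz\,I_{|z|<1}(z)\bigr)\,\nu_i(\mathrm{d}z)\Bigr]\Bigr)$$
by the scalar L\'{e}vy-Khintchine formula. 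Then I would show that the resulting exponent coincides with the exponent in the $\mathbb{R}^n$ L\'{e}vy-Khintchine formula for the candidate triplet. Since $A=\mathrm{diag}(A_1,\dots,A_n)$, the drift and Gaussian parts combine to $i\langle b,u\rangle-\tfrac{1}{2}\langle u,Au\rangle=\sum_i(ib_iu_i-\tfrac{1}{2}A_iu_i^2)$. For the jump part, the measure $\nu$ is supported on the union of coordinate axes, so for any Borel function $g$,
$$\int_{\mathbb{R}^n\setminus\{0\}}g(y)\,\nu(\mathrm{d}y)=\sum_{i=1}^{n}\int_{\mathbb{R}\setminus\{0\}}g(z\,e_i)\,\nu_i(\mathrm{d}z),$$
where $e_i$ is the $i$-th standard basis vector. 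Applied to $g(y)=e^{i\langle u,y\rangle}-1-i\langle u,y\rangle I_{|y|<1}(y)$, and using $\langle u,z e_i\rangle=u_iz$ and $|z e_i|=|z|$, this produces exactly the sum over $i$ of the scalar jump integrals above.

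There is no real obstacle; the main bookkeeping point is the L\'{e}vy measure admissibility $\int_{\mathbb{R}^n\setminus\{0\}}(|y|^2\wedge 1)\,\nu(\mathrm{d}y)<\infty$, which follows by the same reduction to coordinate axes:
$$\int_{\mathbb{R}^n\setminus\{0\}}(|y|^2\wedge 1)\,\nu(\mathrm{d}y)=\sum_{i=1}^{n}\int_{\mathbb{R}\setminus\{0\}}(|z|^2\wedge 1)\,\nu_i(\mathrm{d}z)<\infty,$$
each term being finite because each $\nu_i$ is a scalar L\'{e}vy measure. Uniqueness of the generating triplet of a L\'{e}vy process (a standard consequence of the L\'{e}vy-Khintchine representation) then identifies the triplet of $L(t)$ as the claimed $(b,A,\nu)$.
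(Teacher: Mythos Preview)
Your argument is correct. The paper does not actually give a proof of this lemma: it only says that the statement ``can be verified easily from the definition'' and then proceeds directly to use it in the examples. Your route via the characteristic function and the L\'{e}vy--Khintchine formula is the standard way to make that verification precise, and all of your steps (checking the vector process is L\'{e}vy, factoring by independence, reducing the integral against $\nu$ to a sum of scalar integrals along the coordinate axes, checking the integrability condition for $\nu$, and invoking uniqueness of the triplet) are valid. In effect you have supplied the details the authors chose to omit; there is no discrepancy in approach to speak of, since the paper offers none.
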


\bigskip

\noindent\textbf{Example 1.  One-dimensional Marcus SDE driven by  $\alpha$-stable process}\\
Consider the following SDE,
\begin{equation}\label{ex1_1}
  {\rm d}X(t) = f(X(t)){\rm d}t + X(t)\diamond {\rm d}L (t), \qquad  X(0) = x_0 \in \mathbb{R},
\end{equation}
where $X(t) \in \mathbb{R}$, $f: \mathbb{R} \to \mathbb{R}$ is Lipschitz and continuously differentiable, $L(t)$ is a symmetry $\alpha$-stable process with the generating triplet $(b, A, \nu)$ as  $A=0$ and $\nu({\rm d} y) = \dfrac{{\rm d} y}{|y|^{1+\alpha}}$. Equation (\ref{ex1_1}) can be written in the form of (\ref{s1_6}) with $d=1$, $n=1$, and  $\sigma(X(t))=X(t)$.

Solving  (\ref{s1_8a}), which becomes
\begin{equation}\label{ex1_2}
  \frac{{\rm d}\Phi(r)}{{\rm d}r} = -\Phi(r)v, \qquad \Phi(0) = u,
\end{equation}
we get
\begin{equation}\label{ex1_3}
 \tilde H(u, v) = ue^{-v}.
\end{equation}
By substituting $b$, $A$, $\nu$, and $\tilde H$ into (\ref{s1_9}), we get the following Fokker-Planck equation which governs the probability density function of $X(t)$ in SDE (\ref{ex1_1}),
\begin{align}\label{ex1_6}
  \frac{\partial p(x, t)}{\partial t} &=  - \frac{\partial }{\partial x}\left[\left(f(x ) +   bx\right) p(x, t) \right]\nonumber\\
       &+ \int_{\mathbb{R} \backslash \{0\}}\frac{ p(xe^{ -y},t)e^{ -y}  - p(x, t)
   + \frac{\partial }{\partial x}(xp(x, t))y I_{|y| < 1}(y)}{|y|^{1+\alpha}}{\rm d}y.
\end{align}

\noindent\textbf{Example 2. Nonlinear oscillator under excitation of combined Gaussian and Poisson white noise} \\
\\
Consider the second-order SDE
\begin{eqnarray}\label{s2_1}
\ddot{Y}(t) + Y(t) = f(Y(t), \dot Y(t)) +\dot  B(t) + \dot Y(t)\diamond \dot C(t),
\end{eqnarray}
where $Y(t) \in \mathbb{R}$, $f: \mathbb{R}^2 \to \mathbb{R}$, $B(t)$ is a scalar Brownian motion, and $C(t)$ is a compound Poisson process expressed as
\begin{align}\label{s2_2}
C(t) = \sum\limits_{i=1}^{N(t)}  r_i.
\end{align}
Here $N(t)$ is a Poisson process with intensity parameter $\lambda$ and  $\{r_i, i=1, 2, \cdots, N(t)\}$  are i.i.d random variables with probability distribution function $\rho ({\rm d}x)$.  Note that (\ref{s2_1}) describes an stochastic oscillator \cite{SunDuanLi2016}. The first term in the left-hand side of (\ref{s2_1}) represents the acceleration, the second term represents elastic force, and the RHS represents all other forces. Let $X_1(t) = Y(t)$, $X_2(t) = \dot Y(t)$, then (\ref{s2_1}) can be written  as
\begin{eqnarray}\label{s2_3}
   \begin{cases}{\rm d}X_{1}(t) &=X_2(t) {\rm d}t \nonumber,\\
  {\rm d}X_{2}(t) &= -X_1(t) {\rm d}t + {\rm d} B(t) + X_2(t)\diamond {\rm d} C(t)\end{cases}.
\end{eqnarray}
 It can be expressed in the form of  (\ref{s1_6}) with $X(t)=\begin{pmatrix} X_1(t)\\X_2(t)\end{pmatrix}$, $f(X(t))~=~\begin{pmatrix} X_2(t)\\-X_1(t)\end{pmatrix}$,
$\sigma(X(t)) = \begin{pmatrix} 0 &0\\ 1 &X_2(t)\end{pmatrix}$,  and  $L(t) = \begin{pmatrix}B(t)\\C(t)\end{pmatrix}$. It follows from Lemma \ref{lemma3} that $L(t)$ is a two-dimensional L\'{e}vy process with the generating triplet $(b, A, \nu)$  as $b=0$, $A=\begin{pmatrix} 0 &0\\ 0 &1\end{pmatrix}$, and $\nu ({\rm d}y_1, {\rm d} y_2 ) = \lambda \delta_0({\rm d}y_1) \rho({\rm d}y_2)$.

Now the ODE (\ref{s1_8a}) becomes
\begin{eqnarray}\label{s2_4}
\begin{cases}
   \frac{d\Phi_{1}(r)}{{\rm d}r}& = 0,\\
   \frac{d\Phi_{2}(r)}{{\rm d}r}& =  -\Phi_{1}(r) - v_{2}\Phi_{2}(r)
   \end{cases}
\end{eqnarray}
with $\Phi(0) =u$.
It follows from (\ref{s2_4}) and (\ref{s1_7a}) that
\begin{align}\label{s2_8}
\tilde H((u_1,u_2)^T,(v_1, v_2)^T)=\begin{pmatrix} u_1\\ u_1 K(-v_2) + e^{-v_2} u_2\end{pmatrix},
\end{align}
where $K(\cdot)$ is defined as
\begin{align}\label{s2_6}
K(x)=\begin{cases}\dfrac{1-e^{x} }{x},\quad &{\text{for}} \quad x\ne 0\\ -1,\quad &{\text{for}} \quad x=0.\end{cases}
\end{align}
 Let $p(x_1, x_2, t)$ be the joint probability density function of $(X_1(t),X_2(t))$. From Theorem \ref{theorem1}, the governing equation for $p$ is
\begin{align}\label{s1_26}
   \frac{\partial p}{\partial t}(x_1, x_2, t)
   &=    -  \dfrac{\partial  }{\partial x_1}\left[x_2 p (x_1, x_2, t) \right]+   \dfrac{\partial }{\partial x_2}\left[\left(-x_1+\frac{1}{2}x_2\right) p  (x_1, x_2, t)\right] \nonumber\\
&+\frac{1}{2} \frac{\partial ^2 }{\partial x_2^2} \left[x_2^2 p(x_1,x_2,t)\right]\nonumber\\
     &+   \lambda\int_{\mathbb{R}^{1}\backslash \{0\}} [p(x_1,\, x_1 K(-y_2) + e^{-y_2} x_2,\, t)e^{-y_2} -  p(x_1, x_2, t)]\rho( {\rm d} y_2).
\end{align}

\textbf{Example 3.} We consider the  SDEs in $\mathbb{R}^{2}$.
\begin{align}\label{s3_1}
\begin{cases}
  dX_{1}(t) &=  f_{1}(X_{1}(t), X_{2}(t))dt + X_{2}(t)\diamond dL_{1}(t),\\
  dX_{2}(t) &=  f_{2}(X_{1}(t), X_{2}(t))dt + X_{1}(t)\diamond dL_{2}(t)
\end{cases}
\end{align}
with initial value $X_{0} = x_0 \in \mathbb{R}^{2}$,
where   $L_1(t)$ and $L_2(t)$ are two independent $\alpha$-stable processes with generating triplets as $(1, 0, \nu_1)$ and $(1, 0, \nu_2)$, respectively. $\nu_1$ and $\nu_2$ are defined as $\nu_1({\rm d}y)= \dfrac{{\rm d}y}{|y|^{1+\alpha_1}}$ and $\nu_2({\rm d}y)= \dfrac{{\rm d}y}{|y|^{1+\alpha_2}}$.
Note that (\ref{s3_1}) can be expressed in the form of (\ref{s1_6}) with $\sigma(x)=\begin{pmatrix} x_2 &0\\0 &x_1\end{pmatrix}$ and $L(t) = \begin{pmatrix}L_1(t)\\L_2(t)\end{pmatrix}$. By Lemma 3,  $L(t)$ is a two dimensional L\'{e}vy  process with the generating triplet as $b=\begin{pmatrix} 1\\1\end{pmatrix}$, $A=\begin{pmatrix} 0 &0\\0 &0\end{pmatrix}$ and $\nu({\rm d} y_1, {\rm d}y_2)=  \delta_0({\rm d}y_1)\nu_2({\rm d}y_2)+ \delta_0({\rm d}y_2)\nu_1({\rm d}y_1)$.

Solving ODE (\ref{s1_8a}), which now becomes
\begin{align}\label{s3_2}
\begin{cases}
   \frac{d\Phi_{1}(r)}{dr}& =- v_{1}\Phi_{2}(r),\\
   \frac{d\Phi_{2}(r)}{dr}& =- v_{2}\Phi_{1}(r).
   \end{cases}
\end{align}
with $(\Phi_1(0),\Phi_2(0) ) = (u_1,u_2)$, we get the   mapping $\tilde H$ as
\begin{align}
\tilde{H}((u_1, u_2)^T, (v_1, v_2)^T) =
\left(
  \begin{array}{c}
   u_1 \overline{\rm {cos}}(v_1v_2) - v_1 u_2  \overline{{\rm{sin}}}(v_1v_2)\\
     -v_2u_1 \overline{\rm {sin}}(v_1v_2) +   u_2 \overline{\rm {cos}}(v_1v_2)\\
  \end{array}
\right)
\end{align}
where $\overline{\rm {cos}}(\cdot)$ and $\overline{\rm {sin}}(\cdot)$ are functions defined as
\begin{align}
\overline{\rm {cos}}(x)=\begin{cases} \cosh(\sqrt{|x|}), \quad &{\text{for}} \quad x \ge 0,\\ \cos(\sqrt{|x|}),\quad &{\text{for}} \quad x <0,\end{cases}
\end{align}
and
\begin{align}
\overline{\rm {sin}}(x)=\begin{cases}\dfrac{\sinh(\sqrt{|x|})}{\sqrt{|x|}},\quad &{\text{for}} \quad x > 0, \\1,&{\text{for}} \quad x = 0,\\ \dfrac{\sin(\sqrt{|x|})}{\sqrt{|x|}}\quad &{\text{for}} \quad x <0\end{cases}
\end{align}

By substituting $b$, $A$, $\nu$, and $\tilde H$ into (\ref{s1_9}), we get the following Fokker-Planck equation for the probability density function $p(x_1,x_2,t)$ of the solution $(X_1(t), X_2(t))^T$ in SDE (\ref{s3_1}) as
\begin{align}
  &\frac{\partial p(x_1, x_2, t)}{\partial t}  = -\frac{\partial }{\partial x_{1}}\left[\left(f_{1}(x_1, x_2)  + x_{2}\right)p(x_1, x_2, t)\right]\notag\\
  &\quad - \frac{\partial }{\partial x_{2}}\left[\left(f_{2}(x_1, x_2)  + x_{1}\right)p(x_1, x_2, t)\right]\notag\\
   &\quad+ \int_{\mathbb{R} \backslash {0}}\left[  p(x_1, x_2-y_2x_1, t)   - p(x_1, x_2, t) + \frac{\partial }{\partial x_{2}}(x_{1}p(x_1, x_2, t)y_{2}I_{|y_2| < 1}(y_2))\right]\frac{{\rm d}y_2}{|y_2|^{1+\alpha_2}} \notag\\
  &\quad+ \int_{\mathbb{R} \backslash {0}}\left[ p(x_1-y_1x_2, x_2, t)   - p(x_1, x_2, t) + \frac{\partial }{\partial x_{1}}(x_{2}p(x_1, x_2, t)y_{1}I_{|y_1| < 1}(y_1))\right]\frac{{\rm d}y_1}{|y_1|^{1+\alpha_1}} .\notag\\
\end{align}

\end{document}